\newtheorem{theorem}{Theorem}
\newtheorem{corollary}{Corollary}[theorem]
\newtheorem{notation}{Notation}
\newtheorem{definition}{Definition}
\newtheorem*{definition*}{Definition}
\newtheorem*{example*}{Example}
\newtheorem*{conditional*}{Conditional}
\newtheorem*{remark*}{Remark}
\newtheorem{property}{Property}[definition]
\tikzset{Myarrow/.style={very thin, arrows=-Latex}}
\definecolor{Gray}{gray}{0.85}
\definecolor{LightCyan}{rgb}{0.88,1,1}
\newcolumntype{a}{>{\columncolor{Gray}}c}
\newcolumntype{b}{>{\columncolor{Gray}}r}
\newcolumntype{d}{>{\columncolor{white}}c}
\newcommand{\cA}{{\cal A}}
\newcommand{\cC}{{\cal C}}
\newcommand{\cL}{{\cal L}}
\newcommand{\cS}{{\cal S}}
\newcommand{\cX}{{\cal X}}
\newcommand{\scrB}{{\mathscr B}} 
\newcommand{\scrD}{{\mathscr D}} 
\newcommand{\scrG}{{\mathscr G}} 
\newcommand{\scrL}{{\mathscr L}} 
\newcommand{\scrP}{{\mathscr P}}
\newenvironment{myitemize}
{ \begin{itemize}
    \setlength{\itemsep}{0pt}
    \setlength{\parskip}{0pt}
    \setlength{\parsep}{0pt}     }
{ \end{itemize}                  } 
\newenvironment{myenumerate}
{ \begin{enumerate}
    \setlength{\itemsep}{0pt}
    \setlength{\parskip}{0pt}
    \setlength{\parsep}{0pt}     }
{ \end{enumerate}                  }
\providecommand{\keywords}[1]
{
  \small	
  \textbf{\textit{Keywords---}} #1
}
\title{
A Cluster Model for Growth of Random Trees 
}
\author{Nomvelo Karabo Sibisi}
\affil{University of Cape Town \\ {\small {\tt sbsnom005@myuct.ac.za}}}
\date{June 2021}
\begin{document}
\maketitle
\thispagestyle{empty}

\begin{abstract}
\noindent
We first consider  the growth of trees by  probabilistic attachment  of new vertices to leaves.
This leads rather naturally  to a growth model based on vertex clusters and probabilities assigned to clusters.
This  model turns out to be readily applicable to  attachment at any  depth of the tree,
hence the paper evolves to a general study of  tree growth by cluster-based attachment. 
Drawing inspiration from the concept of intrinsic vertex fitness  due to Bianconi and Barab\'{a}si~\cite{BianconiBarabasi1},
we introduce vertex mass as an {\it additive} intrinsic vertex attribute.
Unlike Bianconi and Barab\'{a}si who used fitness as a  vertex degree multiplier
in the context of growth by preferential attachment,
 we treat vertex mass as a fundamental probabilistic  construct whose additivity plays a primary role.
Notably, independent  mass distributions induce a distribution on the sum of such masses  
through Laplace convolution.
In this way, clusters of vertices inherit their mass distributions from  vertices within the cluster.

Our main contribution is a novel theorem 
 for the joint distribution of  cluster masses,
conditioned on their respective   distributions.
As described by Ferguson~\cite{Ferguson1} and Kingman~\cite{Kingman} 
in the context of distributions on general measures, 
the choice of gamma conditioning distributions leads to the Dirichlet distribution.
The degree-based distribution arises as the mean of the Dirichlet distribution,
with preferential attachment  based on this mean.
The fitness scheme of~\cite{BianconiBarabasi1} and affine preferential attachment can be understood in this light.
The latter can also be related to random forests, 
with creation of new trees in addition to attachment to an existing tree. 

Beyond  gamma conditioning  distributions, 
our  theorem allows other choices, such as the fat-tailed stable distributions with infinite mean.
We discuss  L\'{e}vy conditioning distributions as a gamma alternative,
the L\'{e}vy distribution being a notable instance  of the stable family.
We conclude with a theorem giving the analytic marginals of the normalised distribution
conditioned on the L\'{e}vy distribution.


\end{abstract}
\keywords{random trees; leaf and  deep attachment, preferential attachment; vertex clusters,  vertex mass;
 Laplace convolution; gamma, beta,  Dirichlet, stable,  L\'{e}vy distributions.}

\section{Introduction}
\label{sec:intro}
In the first instance,  we consider a tree that grows by attachment from the root 
(where a directed edge from one vertex to another represents attachment of the former to the latter).
The tree has two  vertex types:\\
(i)  a {\it deep} vertex  with one or more incoming attachments \\
(ii) a {\it leaf}   (or shallow vertex)
with no incoming attachments (the root is also taken to be a leaf at the start). \\
We consider growth by attachment of a new vertex to a leaf and not to a deep vertex.
Figure~\ref{fig:treegraph} is an illustrative sequence of `snapshots' of a tree as it  grows by leaf attachment 
(time increases  to the right in each snapshot).
The rules for growth by leaf attachment are as follows:
\vspace{-1pc}
 \begin{labeling}{Rule 5:}
\setlength{\partopsep}{0pt}
\setlength{\topsep}{0pt}
\setlength{\itemsep}{0pt}
\setlength{\parskip}{0pt}
\setlength{\parsep}{0pt} 
\item[Rule~1:] Only new vertices may issue attachments, existing 
 vertices are `inactive'.
\item[Rule~2:] For a tree, a new vertex  issues exactly one attachment 
\item[Rule~3:] {\it Leaf attachment}: a new vertex may only attach to a leaf.
 \item[Rule~4:] A leaf can receive multiple attachments from different  new vertices created in the same time step.
 Thereafter, the leaf becomes deep and may not receive further attachments at later times.
 \item[Rule~5:] New vertices select attachment targets from a probability distribution over leaves.
\end{labeling}
\vspace{-1pc}

 \begin{figure}[tbh]
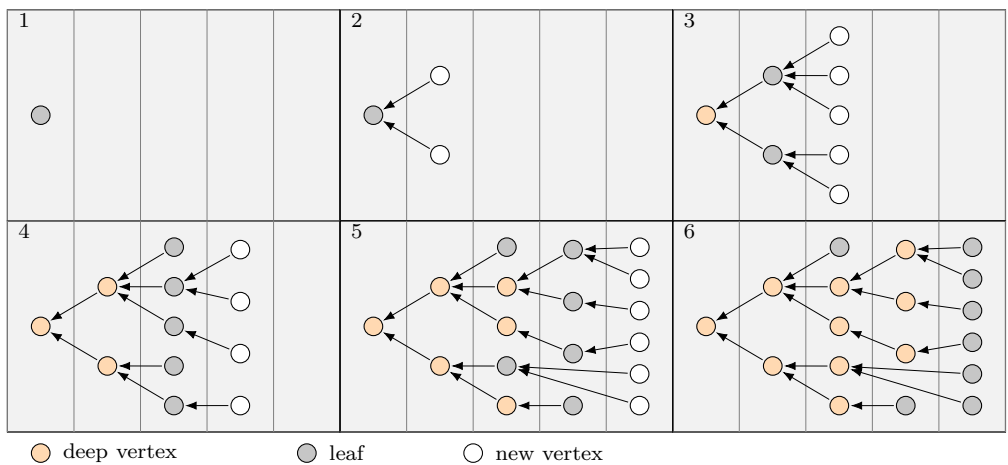

\centering
\include{Figure_tree_6frames} 
\caption{
Tree grows sequentially from  1 to 6 through leaf attachment. 
}
\label{fig:treegraph}
\end{figure}


There are two logically distinct steps at play. 
The first step is the  creation of $n_i\ge0$ new vertices in the $i^{\rm th}$ time interval.
We  may take such vertex creation  to follow a Poisson process, 
{\it i.e.}\ at each time step $i$, the number $n_i$ is drawn from a Poisson 
distribution with a given rate. 
The second step is the attachment of  newly created vertices  to leaves of the  existing tree. 
It is the latter step that is of primary interest -- 
choosing a leaf probability distribution from which new vertices select their  attachment targets.

If we ignore the history of the tree altogether, then there is no reason to prefer one leaf over another.
Hence the leaf distribution to be used for attachment may be taken to be uniform.
However, we do wish  to  strike a probabilistic distinction amongst leaves 
based on how well-connected they are to the history of the tree.
Broadly speaking, we want to encourage branching of the tree
 or, equivalently,  clustering of vertices and discourage 
 isolated chains of vertices.
A local view of  clustering  is to consider  attachments that leaves make to their immediate predecessors.
The more leaves cluster by attaching to a common predecessor, the more they are to be favoured for attachment. 
Hence a cluster, {\it i.e.} a  set of vertices that directly attach  to a common vertex, is the object of primary interest.
It thus makes sense to 
consider a probability distribution over such clusters of leaves rather than treating the leaves as isolated objects.
Once a  cluster is selected, a leaf within the cluster may then be selected on the basis that leaves within a cluster are  equivalent, 
{\it i.e.}\ the leaf distribution is uniform within a cluster. 

 \begin{figure}[tbh]
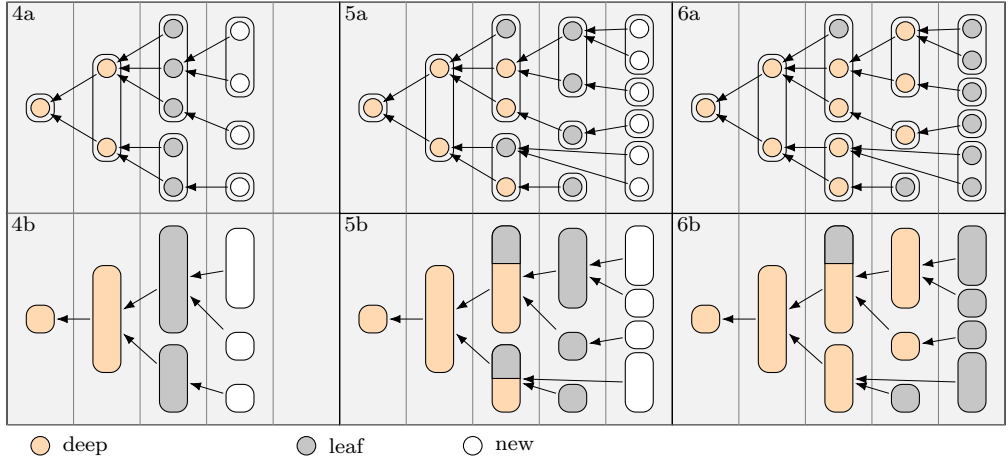

\centering
\include{Figure_tree_cells} 
\caption{
Ringed vertices are  clusters of leaves attaching to a common deep vertex.
}
\label{fig:treecells}
\end{figure}
We may now reinterpret the example of Figure~\ref{fig:treegraph} in terms of clusters and attachments amongst  them, 
as illustrated in Figure~\ref{fig:treecells}.
There are  clusters of deep vertices, clusters of leaves and hybrid clusters containing both.

An immediate  observation at this point is that  the cluster interpretation need not be restricted to leaf attachment.
It is also applicable to growth by unrestricted  attachment to an existing vertex at any depth of the tree.
We shall refer to the latter as free attachment.
In either case, for a given vertex $v$, let a cluster $\cC_v$ be the set of of vertices that attach to $v$
($v$ itself belongs to a cluster that attaches  to some earlier vertex, except for the root).
If $v$ is a leaf then $\cC_v$ is the empty set.
Let  $|\cC_v|$ be a quantitative property  of $\cC_v$. 
How we define $|\cC_v|$ is what is at the core of this paper, more so than whether we are dealing with leaf or free attachment.
An obvious choice of $|\cC_v|$ is the number of vertices in  $\cC_v$, or the in-degree of $v$. 
We shall elaborate below on the more nuanced choice of $|\cC_v|$ taken here.
The fundamental   difference between leaf and free  attachment lies in how we use $\cC_v$. 

{\bf Leaf Attachment}
\vspace{-1pc}
 \begin{labeling}[$\bullet$]{$\bullet$}
 \setlength{\partopsep}{0pt}
\setlength{\topsep}{0pt}
\setlength{\itemsep}{0pt}
\setlength{\parskip}{0pt}
\setlength{\parsep}{0pt} 
\item A cluster $\cC_v$  is static, containing a fixed number of vertices  in a single time interval that attach to $v$. 
\item For any deep vertex $v$ for which $\cC_v$ contains one or more leaves, a new vertex first selects  $\cC_v$
and then selects a  leaf within $\cC_v$   to attach to. 
A cluster with no leaves is at full capacity and is thus not available for selection.
\item The probabilistic  selection  of $\cC_v$ is based on  its $|\cC_v|$ relative to other clusters containing at least one  leaf.
\end{labeling}
{\bf Free Attachment}
\vspace{-1pc}
 \begin{labeling}[$\bullet$]{$\bullet$}
 \setlength{\partopsep}{0pt}
\setlength{\topsep}{0pt}
\setlength{\itemsep}{0pt}
\setlength{\parskip}{0pt}
\setlength{\parsep}{0pt} 
\item A cluster $\cC_v$ is dynamic with no simple topology --  $\cC_v$ evolves  as  new vertices attach to $v$.  
\item  A new vertex directly selects an existing vertex $v$ at any depth to attach to. 
Hence an existing vertex can receive an unlimited number of attachments.
\item The probabilistic  selection  of $v$ is based on its  $|\cC_v|$ relative to all  existing vertices.
If  $|\cC_v|$ is the in-degree of $v$ then leaves would be excluded since they would have $|\cC_v|=0$.
A simple remedy is to introduce a global offset $\beta>0$ so that $|\cC_v|\to|\cC_v|+\beta$.
\end{labeling}

The  conceptual benefit of  the leaf attachment problem is that it has prompted us  
to think in terms of clusters and  attributes we might assign to clusters that are more general than vertex in-degree.
Such thinking does not arise naturally in the free attachment case where clusters are topologically awkward.
We have thus chosen to retain  the thought process flowing from  leaf attachment
even though the probabilistic formulation of the paper is ultimately not restricted to leaf attachment.

Attachment based on  vertex in-degree 
is  known as preferential attachment.
It is widely used in network science (Barab\'{a}si~\cite{Barabasi}, van der Hofstad~\cite{vdHofstad}, Newman~\cite{Newman},
Coscia~\cite{Coscia}).
Price~\cite{Price} referred to such attachment
as cumulative advantage in his graphical model of citation networks (a generalisation of a tree to a directed graph where a 
new vertex may make multiple attachments to   existing vertices).
Preferential attachment is  often referred to as a \textquote{rich get richer} scheme 
that rewards vertices that are already rich in attachments.

To construct a cluster distribution, we look beyond mere vertex count to consider an  intrinsic vertex attribute 
that we refer to as vertex mass, which is positive and additive. 
Hence the mass of a cluster is a sum of the masses of the vertices in the cluster.
The probabilistic model may thus be summarised as follows.
 We take vertex  mass to be 
 governed by an  assigned probability distribution.
 We shall take the vertices to be independent. 
 The mass of a cluster is thus 
 the sum of the masses of the vertices in the cluster,
 whose distribution is  induced by the distributions of the masses of the vertices on the cluster.
Specifically, the distribution of a sum of independently distributed variables is a convolution of the distributions of the individual variables.
This is a crucial consistency property  that underpins our mass-based model -- 
the mass distribution of a cluster is a convolution of the mass distributions of its constituent vertices.

This may beg the question of whether we can simply invent  intrinsic attributes  at will. 
We take the view that the justification for any model lies in its ability to generate specified behaviour,
or reproduce observed behaviour if it is intended to be a model of the external world. 
It suffices  that the model  exhibit internal mathematical consistency which, in our case, is the probabilistic convolution structure 
imposed by  additivity.
For variables on the real line $(-\infty,\infty)$, such convolution is the familiar Fourier convolution.
Restriction to the nonnegative half-line $[0,\infty)$ gives  Laplace convolution, which will be the workhorse of our model.

We do not claim originality for the notion of intrinsic vertex attributes.
Intrinsic vertex mass is inspired by the concept of  intrinsic vertex fitness due to  Bianconi and  Barab\'{a}si~\cite{BianconiBarabasi1}.  
 The  focus of~\cite{BianconiBarabasi1} was growth of a graph by the degree-based scheme of preferential attachment.
 Hence the  fitness distribution was used solely to generate a single sample per vertex  that is in turn used
as a degree  multiplier to tune the degree distribution.
Beyond the inspiration, we adopt a different modelling route from~\cite{BianconiBarabasi1} based on vertex mass.
In fact, it will emerge that a degree multiplier can arise in a rather different context for our probabilistic model.

The primary objective of 
preferential attachment  is to explore asymptotic behaviour -- 
whether or not the limiting degree distribution obeys scale-free (power law) behaviour  
(Barab\'{a}si and Albert~\cite{BarabasiAlbert}). 
With fitness included, it is  possible to generate  \textquote{winner takes all} degree clustering reminiscent of
Bose-Einstein condensation in physics (Bianconi and  Barab\'{a}si~\cite{BianconiBarabasi2}). 
In pursuit  of  scale-free behaviour by an  alternative route to preferential attachment,
Caldarelli {\it et al.}~\cite{Caldarelli} explored the fitness distribution in its own right instead of the degree distribution. 

Point processes involve similar probabilistic constructs to those considered here.
The conceptual difference is that  
while our clusters may visually resemble spatial cells, such clusters only come into being as a result of  vertex attachment, 
they do not pre-exist like some partition of a spatial domain may do even before any random points are strewn across it.
Nonetheless, graphs can  usefully be modelled as  point processes.
In an approach inspired by the adjacency matrix, Caron and Fox~\cite{CaronFox} model a graph as a point process on $\mathbb{R}_+^2$
whose points are pairs of connected vertices.
 Each vertex has an associated positive sociability parameter which, in turn, is a point of a Poisson point process
 or a jump of a  completely random measure.
The intrinsic  sociability of a vertex  rather than its degree is the fundamental probabilistic attribute. 
The approach has much in common with other work on random measures
({\it e.g.} Ferguson~\cite{Ferguson1, Ferguson2}, Sibisi and Skilling~\cite{SibisiSkilling}).

\paragraph{Main Result:}
At the outset, we had the rather focussed objective of constructing  a growth model for the leaf attachment problem.
As our thinking evolved, it became clearer that we needed first to consider a problem of much broader scope.

Accordingly, our primary contribution is significantly  more far-reaching than the paper's initial brief.
It takes the form of Theorem~\ref{thm:jointprob}, which gives  
the joint distribution of  cluster masses, conditioned on  their respective independent  distributions.
A sample from this distribution is  itself a  distribution that, in the original application context, 
we  may use for attachment in our tree growth application.

To our awareness, Theorem~\ref{thm:jointprob} is novel, at least to the extent that it accommodates any 
set of conditioning distributions.
The idea of such generality is contained in Kingman~\cite{Kingman}, despite the difference in approach.
Choosing gamma conditioning distributions leads to the Dirichlet distribution,  initially  described by Ferguson~\cite{Ferguson1}.
He addressed the more general problem of a Dirichlet process, which may be described  as a set of consistent Dirichlet distributions 
over different   partitions of an interval or  spatial domain. 

We shall see  that  the mean of the Dirichlet distribution is the degree distribution.
Hence we may interpret preferential attachment as probabilistic selection scheme  based on the Dirichlet mean.
However, a representative sample from the Dirichlet distribution can be very different  from the mean, depending on the 
parameters of the conditioning gamma distributions.

Furthermore, the generality of Theorem~\ref{thm:jointprob} allows the choice of any conditioning distributions. 
 An example is  the family of fat-tailed stable distributions with infinite mean, such as  the L\'{e}vy distribution.
We conclude with a theorem giving the analytic form of the marginals of the normalised distribution
conditioned on the L\'{e}vy distribution.
 This distribution is a novel alternative to the Dirichlet distribution induced by gamma conditioning distributions.
 We defer more detailed study to a separate  paper.

Toward a more detailed discussion of the model, we start with some well-known  preliminaries.

\section{Preliminaries}
\label{sec:prelim}
We restrict attention to distributions  defined on the nonnegative half-line $[0,\infty)$ 
and  take every distribution to be normalised and to have a density. 
\begin{notation}
A probability distribution $F(x)$  and its density $f(x)\ge0$ with respect to $dx$ are related as follows
\begin{equation}
F(x)  = \int_0^x dF(u)= \int_0^x f(u) du  
\end{equation}
 with normalisation $F(\infty)=1$.
\end{notation}

\begin{definition}
The Laplace transform of  a probability distribution $F(x)$  is
\begin{align}
\cL\{F\}(s) \equiv \widetilde{F}(s) &= \int_0^\infty e^{-sx} dF(x) \qquad s\ge0
\label{eq:laplaceF}
\intertext{
If   $F(x)$ has a  density $f(x)$, $\widetilde{F}(s)$ may be  written as $\tilde{f}(s)$}
\cL\{f\}(s) \equiv \tilde{f}(s) &= \int_0^\infty e^{-sx} f(x) dx  
\label{eq:laplacef}
\end{align}
\end{definition}
A  unit jump at $x_0$ in $F$ corresponds to an atom  at $ x_0$ in  $f(x)$ represented by the  
Dirac delta $\delta(x-x_0)$. 

\begin{definition}
The Laplace convolution $f_1\star f_2$ of  two functions  is defined by 
\begin{align}
(f_1 \star f_2)(x) &= \int_0^x f_1(u) f_2(x-u) du =  \int_0^x f_1(x-u) f_2(u) du 
\label{eq:convolution}
\end{align}
\end{definition}
Convolution is associative: $f_1\star f_2\star f_3 = f_1\star (f_2\star f_3) = (f_1\star f_2)\star f_3$, {\it etc}. 
Hence the definition readily generalises to an arbitrary number of functions.
Following Feller, we use the notation  $f^{n\star}$ for the $n$-fold self-convolution of $f$.

Theorem~\ref{thm:lplcconv} is standard and so is the proof.
\begin{tcolorbox}
\begin{theorem} 
The Laplace transform of a convolution of functions on $[0,\infty)$ is the product of the Laplace transforms of the individual functions
\begin{align}
\cL\{f_1\star\dots\star f_n\}(s) &=   \tilde{f_1}(s)\times\dots\times\tilde{f_n}(s)
\label{eq:convtheorem} \\
\cL\{f^{n\star}\}(s) &=  \left( \tilde{f}(s)\right)^n
\label{eq:selfconvtheorem}
\end{align}
\label{thm:lplcconv}
\end{theorem}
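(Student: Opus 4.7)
The plan is to prove the binary case $n=2$ directly from the definitions and then bootstrap to arbitrary $n$ using associativity of convolution, which was noted just before the theorem. The self-convolution formula \eqref{eq:selfconvtheorem} will then be an immediate specialisation of \eqref{eq:convtheorem} with $f_1=\dots=f_n=f$.

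For the binary case, I would start by writing out
\begin{equation*}
\cL\{f_1 \star f_2\}(s) = \int_0^\infty e^{-sx} \int_0^x f_1(u)\, f_2(x-u)\, du\, dx
\end{equation*}
and then swap the order of integration over the region $\{(u,x) : 0 \le u \le x < \infty\}$ to obtain
\begin{equation*}
\int_0^\infty f_1(u) \int_u^\infty e^{-sx} f_2(x-u)\, dx\, du.
\end{equation*}
The substitution $v = x-u$ in the inner integral turns $e^{-sx}$ into $e^{-su}e^{-sv}$ and decouples the two integrals, yielding
\begin{equation*}
\int_0^\infty e^{-su} f_1(u)\, du \cdot \int_0^\infty e^{-sv} f_2(v)\, dv = \tilde{f_1}(s)\,\tilde{f_2}(s).
\end{equation*}

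From here I would induct on $n$. Assuming the formula for $n-1$, associativity gives $f_1\star\dots\star f_n = (f_1\star\dots\star f_{n-1})\star f_n$, so the binary case applied to these two factors together with the inductive hypothesis delivers the product of $n$ Laplace transforms. Equation \eqref{eq:selfconvtheorem} follows by setting all $f_i = f$.

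The only technical point that requires care is the interchange of integrals. Since the integrand $e^{-sx} f_1(u) f_2(x-u)$ is nonnegative for $s\ge 0$ (the densities are nonnegative by the notation convention adopted for $f$), Tonelli's theorem justifies the swap without any additional integrability hypothesis; the resulting iterated integral is simply $\tilde{f_1}(s)\tilde{f_2}(s)$, which is finite because each $\tilde{f_i}(s)\le 1$ for probability densities and $s\ge 0$. This is the only nontrivial step, and it is entirely routine, which is why the result is invoked as standard.
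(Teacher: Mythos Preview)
Your proposal is correct and follows essentially the same route as the paper: prove the $n=2$ case by writing out the double integral, swapping the order of integration, substituting $v=x-u$ to factor, and then extend to general $n$ by associativity, with \eqref{eq:selfconvtheorem} as the special case $f_1=\dots=f_n=f$. The only addition you make is the explicit Tonelli justification for the interchange, which the paper omits as routine.
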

\vspace{-2.5pc}
\end{tcolorbox}

\begin{proof}[Proof of Theorem \ref{thm:lplcconv}]
Consider the Laplace transform of the  convolution of 2 functions 
\begin{align*}
\cL\{f_1\star f_2\}(s) 
&=  \int_0^\infty e^{-sx} \left[\int_0^x f_1(u)f_2(x-u) du\right] dx \\
&=  \int_0^\infty f_1(u) \left[\int_u^\infty e^{-sx}  f_2(x-u)dx\right] du \\
&=  \int_0^\infty f_1(u) \left[\int_0^\infty e^{-s(u+y)}  f_2(y)dy\right] du \\
&=  \int_0^\infty e^{-su} f_1(u)du \int_0^\infty e^{-sy}  f_2(y)dy \\
&= \tilde{f_1}(s) \tilde{f_2}(s)
\end{align*}
The generalisation~(\ref{eq:convtheorem}) and  particular case~(\ref{eq:selfconvtheorem}) follow  from associativity of convolution.
\end{proof}

Theorem~\ref{thm:sumconv} is standard.
\begin{tcolorbox}
\begin{theorem} 
Let $\{x_1,\dots,x_n\}\in[0,\infty)$ be independently distributed
with distributions $\{F_1,\ldots,F_n\}$ and associated  densities $\{f_1,\ldots, f_n\}$  respectively.
The   distribution of the sum $z=x_1+\dots+x_n$ 
is  the Laplace convolution of the $n$ individual distributions, \\
{\it i.e.}\ the  density of  $z$ is $\Pr(z|F_1,\ldots,F_n)\equiv\Pr(z|f_1,\ldots,f_n) = (f_1\star \dots\star f_n)(z)$. 
\label{thm:sumconv}
\end{theorem}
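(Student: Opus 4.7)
The plan is to prove this by reducing it to the convolution theorem for Laplace transforms (Theorem~\ref{thm:lplcconv}) via the uniqueness of Laplace transforms on $[0,\infty)$. First I would observe that independence of $x_1,\dots,x_n$ implies that the expectation of $e^{-sz}=e^{-s(x_1+\cdots+x_n)}$ factorises as a product of expectations. Concretely, the Laplace transform of the (unknown) density $g(z)=\Pr(z\mid f_1,\dots,f_n)$ of $z$ satisfies
\begin{equation*}
\tilde{g}(s) \;=\; E\!\left[e^{-sz}\right] \;=\; E\!\left[\prod_{i=1}^n e^{-sx_i}\right] \;=\; \prod_{i=1}^n E\!\left[e^{-sx_i}\right] \;=\; \prod_{i=1}^n \tilde{f_i}(s),
\end{equation*}
where the factorisation step uses independence and the final equality uses the definition of $\tilde{f_i}$ in~(\ref{eq:laplacef}).

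Next I would invoke Theorem~\ref{thm:lplcconv}, which states that $\cL\{f_1\star\cdots\star f_n\}(s)=\tilde{f_1}(s)\cdots\tilde{f_n}(s)$. Combining with the display above, the densities $g$ and $f_1\star\cdots\star f_n$ have identical Laplace transforms on $s\ge0$. Since the Laplace transform is injective on integrable functions on $[0,\infty)$ (both $g$ and the convolution are nonnegative probability densities, hence integrable, and their transforms agree on all $s\ge 0$), we conclude $g=f_1\star\cdots\star f_n$ almost everywhere, which is the claimed identity of densities.

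For a reader who prefers a direct derivation, I would alternatively sketch an inductive argument in parallel. The base case $n=2$ follows by computing
\begin{equation*}
\Pr(z\le Z) \;=\; \iint_{\{x_1+x_2\le Z,\;x_i\ge 0\}} f_1(x_1)f_2(x_2)\,dx_1\,dx_2 \;=\; \int_0^Z \!\left[\int_0^{Z-x_1} f_2(x_2)\,dx_2\right] f_1(x_1)\,dx_1,
\end{equation*}
and differentiating in $Z$ to obtain $(f_1\star f_2)(Z)$. The inductive step then groups $x_1+\cdots+x_{n-1}$ as a single variable with density $f_1\star\cdots\star f_{n-1}$ (by hypothesis), independent of $x_n$, and applies the $n=2$ case together with associativity of convolution.

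There is no real obstacle here: the only subtlety is the appeal to uniqueness of the Laplace transform, which is standard for probability densities on $[0,\infty)$. I would prefer the transform-based proof for brevity, since it exploits Theorem~\ref{thm:lplcconv} that has just been established and avoids repeating the Fubini manipulation.
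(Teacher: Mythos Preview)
Your proof is correct, but your primary route differs from the paper's. The paper argues directly in the density domain: for $n=2$ it writes the joint density $\Pr(z,x_1,x_2)=\delta(z-x_1-x_2)f_1(x_1)f_2(x_2)$, marginalises over $x_2$ using the sifting property of the delta to obtain $\Pr(z,x_1)=f_1(x_1)f_2(z-x_1)$, and then integrates out $x_1$ to land on $(f_1\star f_2)(z)$; the general case follows by associativity exactly as in your inductive sketch. Your transform-based argument is cleaner in that it reuses Theorem~\ref{thm:lplcconv} and avoids any Fubini or delta manipulation, at the cost of appealing to injectivity of the Laplace transform, which the paper never invokes. The paper's delta-function computation, on the other hand, is not just a proof of this statement: the intermediate identities~(\ref{eq:przx1x2})--(\ref{eq:przx1}) are precisely the template reused in the proof of Theorem~\ref{thm:jointprob} and Corollary~\ref{cor:marginal}, so the more hands-on approach earns its keep downstream. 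Your alternative CDF-differentiation sketch is essentially equivalent to the paper's $n=2$ step, just without the delta-function bookkeeping.
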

\tcblower
{\bf Note.} To limit notational clutter, we shall often write  $\Pr(z|f_1,\ldots,f_n)$ merely as $\Pr(z)$, 
where the conditioning information can be inferred from the context.
\end{tcolorbox}

\begin{proof}[Proof of Theorem \ref{thm:sumconv}]
Consider first $n=2$. 
$\Pr(x_1,x_2)=\Pr(x_1)\Pr(x_2)=f_1(x_1)f_2(x_2)$ since $x_1, x_2$ are independent. 
Also, $z=x_1+x_2$ so that $\Pr(z|x_1,x_2)=\delta(z-x_1-x_2)$.
Then, starting from the joint density $\Pr(z,x_1,x_2)$, we may marginalise to obtain $\Pr(z)\equiv\Pr(z|f_1,f_2)$: 
\begin{alignat}{3}
             &\Pr(z,x_1,x_2) &&= \Pr(z|x_1,x_2)\Pr(x_1,x_2)          &&= \delta(z-(x_1+x_2))f_1(x_1)f_2(x_2) 
\label{eq:przx1x2} \\
\implies & \Pr(z,x_1)       &&= \int_0^\infty \Pr(z,x_1,x_2) dx_2 &&= f_1(x_1) \int_0^\infty  \delta(x_2-(z-x_1))f_2(x_2)dx_2 \nonumber \\
             &                       &&                                                        &&= f_1(x_1) f_2(z-x_1)   
\label{eq:przx1} \\
\implies &  \Pr(z)             &&= \int_0^\infty \Pr(z,x_1) dx_1        &&= \int_0^z f_1(x_1)f_2(z-x_1) dx_1 \\
&&&&&= (f_1\star f_2)(z)
\label{eq:prz}
\end{alignat}
By associativity,  $z=(x_1+x_2)+x_3 \implies \Pr(z)=((f_1\star f_2)\star f_3)(z)=(f_1\star f_2\star f_3)(z)$.
The general result follows: $z=x_1+x_2+\dots+x_n \implies \Pr(z|f_1,\ldots,f_n) =(f_1\star \dots\star f_n)(z)$.
\end{proof}

\section{Main Theorem}
\label{sec:maintheorem}

As in Theorem~\ref{thm:sumconv}, 
let $\{x_1,\dots,x_n\}\in[0,\infty)$ be independently distributed
with distributions $\{F_1,\ldots,F_n\}$ (densities $\{f_1,\ldots, f_n\}$)  respectively
and let  $z=x_1+\dots+x_n$.
Define  normalised variables $\{p_i : x_i=zp_i, i=1,\ldots, n\}$ 
 so that  $\{p_1,\dots,p _n\}\in[0,1]$ with $ p_1+\dots+p_n=1$.
 Then $(p_1,\ldots,p_n)$ may be looked upon as a probability distribution $P(n)$ of an $n$-valued discrete variable: 
$\{P(i|n) = p_i :  i=1,\ldots, n\}$.
Hence a probability distribution of $(p_1,\ldots,p_n)$ may be regarded as a probability distribution of 
a  probability distribution of a  discrete variable.

To our awareness,  Theorem~\ref{thm:jointprob} 
is novel, at least as stated in  general form with any $\{f_1,\ldots,f_n\}$.
\begin{tcolorbox}
\begin{theorem} 
The multivariate   probability distribution  of the  probability distribution $P(n)=(p_1,\ldots,p_n)$, 
conditioned on  $\{f_1,\ldots, f_n\}$, has density: 
\begin{align}
\Pr(p_1,\ldots,p_{n-1}|f_1,\ldots,f_n) 
        &= \int_0^\infty z^{n-1}\prod_{i=1}^n f_i(zp_i)\, dz 
        = \int_0^\infty \frac{dz}{z} \prod_{i=1}^n zf_i(zp_i)  \nonumber 
\end{align}
where  
$p_n=1- (p_1+\dots+p_{n-1})$. 
\label{thm:jointprob}
\end{theorem}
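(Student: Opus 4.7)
The plan is to form the joint density of $(x_1, \ldots, x_n)$ by independence, change variables to $(z, p_1, \ldots, p_{n-1})$ with $z = x_1 + \dots + x_n$ and $p_i = x_i/z$, and then marginalise out $z$.

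Since the $x_i$ are independent, $\Pr(x_1, \ldots, x_n \mid f_1, \ldots, f_n) = \prod_i f_i(x_i)$. The inverse of the change of variables is $x_i = zp_i$ for $i = 1, \ldots, n-1$ and $x_n = z(1 - \sum_{i<n} p_i) = zp_n$. I then need the Jacobian determinant $|\det J|$ of this inverse map. With rows indexed by $x_1, \ldots, x_n$ and columns by $(z, p_1, \ldots, p_{n-1})$, the column for $z$ is $(p_1, \ldots, p_n)^\top$ and for each $j < n$ the column $p_j$ has $z$ in row $j$, $-z$ in row $n$, and zeros elsewhere. Replacing the last row by the sum of all rows (using $\sum_i p_i = 1$) turns it into $(1, 0, \ldots, 0)$; expansion along that row leaves an $(n-1) \times (n-1)$ diagonal minor with entries $z$, so $|\det J| = z^{n-1}$. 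Hence
\[
\Pr(z, p_1, \ldots, p_{n-1} \mid f_1, \ldots, f_n) = z^{n-1} \prod_{i=1}^n f_i(zp_i),
\]
and integrating $z$ over $[0, \infty)$ yields the stated result.

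The only substantive step is the Jacobian, which collapses after a single row operation thanks to $\sum_i p_i = 1$; I do not foresee a genuine obstacle. The measure-zero set $\{x_1 = \cdots = x_n = 0\}$ where the transformation degenerates makes no contribution since the $f_i$ are densities. An alternative derivation in the spirit of the proof of Theorem~\ref{thm:sumconv} would insert $\delta(z - \sum_i x_i) \prod_{i<n} \delta(p_i - x_i/z)$ into $\prod_i f_i(x_i)$ and integrate over the $x_i$: each of the $n-1$ substitutions $\delta(p_i - x_i/z) = z\,\delta(x_i - zp_i)$ contributes a factor of $z$, and the remaining $\delta(z - \sum_i x_i)$ fixes $x_n = zp_n$, recovering the same $z^{n-1} \prod_i f_i(zp_i)$ integrand.
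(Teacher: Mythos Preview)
Your proposal is correct and follows essentially the same route as the paper: form the joint density by independence, pass to $(z,p_1,\ldots,p_{n-1})$, pick up the Jacobian factor $z^{n-1}$, and marginalise $z$. The only cosmetic difference is that the paper first inserts $\delta(z-\sum_i x_i)$ and integrates out $x_n$, so the remaining change of variables $x_i=zp_i$ ($i<n$) has a diagonal Jacobian $z^{n-1}$, whereas you perform the full $n\times n$ change at once and extract the same factor via the row-sum trick; your closing ``alternative derivation'' is in fact exactly the paper's argument.
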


\begin{corollary} 
Let $f_{(i)}$ be the convolution of $(f_1,\ldots,f_n)$  with $f_i$ omitted.
The  $n-1$ marginal distributions  have densities
\begin{align}
\Pr(p_i|f_1,\ldots,f_n) 
  &\equiv \int_0^1 \Pr(p_1,\dots,p_{n-1}|f_1,\ldots,f_n)\,  dp_1\dots dp_{i-1}dp_{i+1}\dots dp_{n-1} \nonumber \\
    &= \int_0^\infty zf_i(zp_i)\,f_{(i)}(z(1-p_i))\, dz \qquad  i=1\ldots n-1  \nonumber
\end{align}
\label{cor:marginal}
\end{corollary}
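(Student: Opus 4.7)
The plan is to obtain the marginal directly from the joint density given by Theorem~\ref{thm:jointprob} by integrating out the remaining $n-2$ free coordinates $\{p_j : j \in \{1,\ldots,n-1\}\setminus\{i\}\}$, and then recognising the inner integral as the Laplace convolution that defines $f_{(i)}$, invoking Theorem~\ref{thm:sumconv}. A key subtlety is that the derived coordinate $p_n = 1 - \sum_{k=1}^{n-1} p_k$ is not itself a variable of integration in the joint density, so some care is needed with the integration region.

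First I would substitute the joint density into the definition of the marginal and swap the order of integration by Fubini--Tonelli (applicable since the integrand is non-negative) so that the $z$-integral sits outside. The factor $f_i(zp_i)$ is then constant with respect to the remaining $p_j$-integrations and can be pulled out. For each fixed $z$, I would make the change of variables $u_j = zp_j$ for $j \in \{1,\ldots,n-1\}\setminus\{i\}$, which contributes a Jacobian $z^{-(n-2)}$. The integration region transforms from $\{p_j \geq 0 : \sum_{j\neq i, j\neq n} p_j \leq 1-p_i\}$ to $\{u_j \geq 0 : \sum_{j\neq i, j\neq n} u_j \leq z(1-p_i)\}$, and the derived coordinate rescales to $u_n = z(1-p_i) - \sum_{j\neq i,j\neq n} u_j$.

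At this point the inner integral is precisely the defining integral for the $(n-1)$-fold Laplace convolution of $\{f_k : k\neq i\}$ evaluated at the point $z(1-p_i)$, that is $f_{(i)}(z(1-p_i))$, by Theorem~\ref{thm:sumconv}. Combining the $z^{n-1}$ factor from the joint density with the $z^{-(n-2)}$ Jacobian factor leaves a single power of $z$, producing exactly the stated formula. The only real bookkeeping hurdle is keeping the asymmetric role of $p_n$ straight: it is not an integration variable in Theorem~\ref{thm:jointprob}, but after the rescaling $u_j = zp_j$ it re-emerges naturally as the closing coordinate of the convolution simplex, so the transformation meshes cleanly with the convolution of Theorem~\ref{thm:sumconv}.
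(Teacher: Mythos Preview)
Your proposal is correct, but it takes a different route from the paper. The paper does \emph{not} start from the joint density of Theorem~\ref{thm:jointprob} and integrate out the $p_j$'s. Instead it goes back to the unnormalised variables: it sets $x_{(i)}=\sum_{j\ne i}x_j$, invokes Theorem~\ref{thm:sumconv} once to get $\Pr(x_{(i)})=f_{(i)}(x_{(i)})$, and then simply reruns the $n=2$ argument of Theorem~\ref{thm:jointprob} on the pair $(x_i,x_{(i)})$. The change of variables $x_i=zp_i$ is one-dimensional and the result drops out immediately, with no simplex integration and no Jacobian bookkeeping.

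Your approach has the virtue of verifying the stated identity literally, since the corollary \emph{defines} the marginal as the $(n-2)$-fold integral of the joint $p$-density; the paper's proof instead derives $\Pr(p_i)$ by an independent construction and relies implicitly on the fact that both must agree. The paper's route is slicker because the aggregation into $x_{(i)}$ happens before normalisation, so the convolution appears without having to undo the scaling $u_j=zp_j$. Your route is more pedestrian but perhaps more transparent about where the convolution structure is hiding inside the simplex integral.
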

\vspace{-2pc}
\end{tcolorbox}

\begin{proof}[Proof of Theorem \ref{thm:jointprob}]
With implicit conditioning on $\{f_1,\ldots, f_n\}$, we have 
$\Pr(z|x_1,\dots,x_n)=\delta(z-\sum_{i=1}^n x_i)$ and 
$\Pr(x_1,\ldots,x_n)=\prod_{i=1}^nf_i(x_i)$,
so that 
\begin{alignat}{2}
&\Pr(z, x_1,\ldots,x_n) 
  &&= \Pr(z|x_1,\dots,x_n) \Pr(x_1,\ldots,x_n)   \nonumber \\
&&&= \delta(z-(x_1+\dots+x_n)) \prod_{i=1}^n f_i(x_i) \nonumber \\
\implies\quad & \Pr(z, x_1,\ldots,x_{n-1})
  &&\equiv  \int_0^\infty \Pr(z,x_1,\ldots,x_n) dx_n  \nonumber \\
&&&=  \int_0^\infty  \delta(z-(x_1+\dots+x_n)) \prod_{i=1}^n f_i(x_i)\,dx_n \nonumber \\
&&&= \prod_{i=1}^n f_i(x_i)  \qquad x_n=z-(x_1+\dots+x_{n-1}) 
\label{eq:jointprobz}
\end{alignat}
With $\{zp_i=x_i\}$, 
$\Pr(z, x_1,\ldots,x_{n-1})dx_1\ldots dx_{n-1} = \Pr(z,zp_1,\ldots,zp_{n-1}) z^{n-1}dp_1\ldots dp_{n-1}$ 
\begin{alignat}{3}
\implies & \Pr(z, p_1,\ldots,p_{n-1}) &&= z^{n-1}\Pr(z,zp_1,\ldots,zp_{n-1})
&&=z^{n-1}\prod_{i=1}^n f_i(zp_i)  = \frac{1}{z} \prod_{i=1}^n z f_i(zp_i) \nonumber \\
\implies & \Pr(p_1,\ldots,p_{n-1})   &&=  \int_0^\infty  \frac{dz}{z} \prod_{i=1}^n z f_i(zp_i)  && p_n=1-(p_1+\dots+p_{n-1})
\label{eq:jointprob}
\end{alignat}
where $\Pr(p_1,\ldots,p_{n-1})\equiv \Pr(p_1,\ldots,p_{n-1}|f_1,\ldots,f_n)$.
\end{proof}

\begin{proof}[Proof of Corollary \ref{cor:marginal}]
$f_{(i)}$ is the convolution of $(f_1,\ldots,f_n)$  with $f_i$ omitted.
By Theorem~\ref{thm:sumconv}:
\begin{align}
x_{(i)} &= \sum_{j\ne i} ^n x_j 
\implies \Pr(x_{(i)}) = f_{(i)}(x_{(i)})  
\end{align}
Also, $z=x_i+x_{(i)}$ so that $\Pr(z|x_i, x_{(i)})=\delta(z-(x_i+x_{(i)})$. 
Hence, similar to the $n=2$ case of Theorem~\ref{thm:sumconv}:
\begin{alignat}{2}
              &\Pr(z, x_i,x_{(i)}) &&= \Pr(z|x_i, x_{(i)})\Pr(x_i,x_{(i)})  \nonumber \\
                                       &  &&= \delta(z-(x_i+x_{(i)})\Pr(x_i)\Pr(x_{(i)}) \\
\implies  &\Pr(z,x_i)            &&= f_i(x_i)\int_0^\infty \delta(x_{(i)}-(z-x_i))\  f_{(i)}(x_{(i)}) dx_{(i)} \nonumber \\
              &                           &&= f_i(x_i) f_{(i)}(z-x_i)  \qquad  i=1\ldots n-1 
\label{eq:marginalx} \\
\implies  &\Pr(z,p_i)            &&= z f_i(zp_i) f_{(i)}(z(1-p_i)) \qquad  zp_i=x_i \\
\implies  &\Pr(p_i)               &&= \int_0^\infty z f_i(zp_i) f_{(i)}(z(1-p_i))\, dz
\label{eq:marginal}
\end{alignat}
where $\Pr(p_i)\equiv \Pr(p_i|f_1,\ldots,f_n)$.
\end{proof}

Theorem~\ref{thm:jointprob} is valid for any set of conditioning distributions $\{F_1,\ldots,F_n\}$
with respective densities $\{f_1,\ldots,f_n\}$.
It places no additional restriction on the  distributions,  such as the existence of means.
Indeed, in Theorem~\ref{thm:levymarginal} below,  we will consider conditioning distributions with infinite means.
We have implicitly  assumed finite $n$. 
Whether~(\ref{eq:jointprob}) and (\ref{eq:marginal}) exist for $n\to\infty$ will depend on $\{f_1,\ldots,f_n\}$.

Corollary~\ref{cor:marginal} enables evaluation of marginals 
without the need first to evaluate  the joint distribution and then marginalise explicitly. 

Adopting the convention of representing a distribution by its conditioning information,  
let $\scrP(F_1,\ldots,F_n)$ or $\scrP(f_1,\ldots,f_n)$ denote the distribution with density~(\ref{eq:jointprob}) 
of Theorem~\ref{thm:jointprob}.
By the foregoing discussion, to draw a sample from $\scrP(F_1,\ldots,F_n)$:
\begin{tcolorbox}
 \begin{labeling}{$\bullet$}
 \setlength{\partopsep}{0pt}
\setlength{\topsep}{0pt}
\setlength{\itemsep}{0pt}
\setlength{\parskip}{0pt}
\setlength{\parsep}{0pt} 
\item[$\bullet$] Draw $n$ independent  samples $\{\hat{x}_i \sim F_i:  i=1\ldots n\}$ from the $n$ distributions $\{F_i\}$  
\item[$\bullet$] Form $\hat{z}=\hat{x}_1+\dots+\hat{x}_n$ and $\{\hat{p}_i=\hat{x}_i/\hat{z}:  i=1\ldots n\}$
\item[$\bullet$] Then $\hat{P}(n)=(\hat{p}_1,\ldots,\hat{p}_n)$ is a  sample point  from   $\scrP(F_1,\ldots,F_n)$. 
$\hat{P}(n)$ is a normalised distribution on an $n$-valued  discrete variable that may then be used as appropriate
(such as selecting the $i^{\rm th}$ cluster given  $n$  clusters).
\end{labeling}
\end{tcolorbox}


The key observation here is that we do not actually need $\scrP(F_1,\ldots,F_n)$ (or its marginals)
to generate a sample from it.
It suffices to generate independent samples from the conditioning distributions  $\{F_1,\ldots,F_n\}$  and normalise.
The benefit of Theorem~\ref{thm:jointprob} is that it shows how formally to obtain the density of $\scrP(F_1,\ldots,F_n)$ (and its marginals).
If this can be done  in closed form, then it facilitates an analytical  study of the  properties of  $\scrP(F_1,\ldots,F_n)$.
It also enables visual representation of $\scrP(F_1,\ldots,F_n)$, at least for small $n$, while 
the $n$ marginals can be  visualised individually  in one-dimension for any $n$.
In the absence of a closed form, the exercise becomes computational rather than analytical --  
we have to rely on many samples from $\scrP(F_1,\ldots,F_n)$ to approximate its properties,
but without a theoretical handle on how many samples are needed for a \textquote{good} approximation.

It is logical then, at least in the first instance, to search for a set   of conditioning distributions $\{F_1,\ldots,F_n\}$
for which Theorem~\ref{thm:jointprob} gives a closed form of the density of $\scrP(F_1,\ldots,F_n)$.
To that end, we turn  to the set of gamma conditioning distributions, for which it will emerge that
Theorem~\ref{thm:jointprob} leads to the Dirichlet distribution, 
as introduced by Ferguson~\cite{Ferguson1}   in his seminal work on the Dirichlet process.

\section{Gamma Conditioning  Distributions}
\label{sec:gamma}
The gamma distribution $\scrG(\alpha,\lambda)$ has density
\begin{align}
\Pr(x|\alpha,\lambda) \equiv \gamma_{\alpha,\lambda}(x) &= \frac{\lambda^{\alpha}}{\Gamma(\alpha)} x^{\alpha-1} e^{-\lambda x}
\label{eq:gamma} 
\end{align}
where $\Gamma(\alpha)$ is the gamma function  $\int_0^\infty x^{\alpha-1} e^{-x}dx$ ($\Gamma(n+1)=n!$ for integer $n$), 
$\alpha>0$ is the shape parameter and  $\lambda>0$ is the decay rate.
The  shape parameter plays a fundamental role: 
\begin{myenumerate}
\item $\alpha=1$:  the density $\gamma_{1,\lambda}=\lambda e^{-\lambda x}$, a simple  exponential.
\item $\alpha>1$:  $\gamma_{\alpha,\lambda}(0)=0$ and $\gamma_{\alpha,\lambda}(x)$ peaks and decays to infinity rather like a skewed Gaussian. 
\item $\alpha<1$: $\gamma_{\alpha,\lambda}(x)$ has \textquote{$1/x$} behaviour, concentrating its mass toward $x=0$.
\end{myenumerate}
For integer shape parameter, $\scrG(n,\lambda)$ is also known as the Erlang distribution,
named after Erlang who discovered  it in his pioneering study of  telephone networks.

The Laplace transform of~{\rm(\ref{eq:gamma})}  is
\begin{align}
\widetilde{\gamma}_{\alpha,\lambda}(s) &= \left(\frac{\lambda}{\lambda+s}\right)^\alpha
\label{eq:lplcgamma} \\
\text{with mean}\quad 
-\widetilde{\gamma}^\prime_{\alpha,\lambda}(0) &= \frac{\alpha}{\lambda} 
\label{eq:gammamean}
\end{align}
It is evident from~(\ref{eq:lplcgamma})  that 
$\widetilde{\gamma}_{\alpha_1,\lambda}(s)\times\widetilde{\gamma}_{\alpha_2,\lambda}(s)
=\widetilde{\gamma}_{\alpha_1+\alpha_2,\lambda}(s)$.
Hence the gamma distribution satisfies  the following closure property:
\begin{property}[Gamma closure]
For a  given decay rate,  the gamma distribution is closed under convolution with the shapes combining additively:
 $\scrG(\alpha_1,\lambda)\star\scrG(\alpha_2,\lambda)= \scrG(\alpha_1+\alpha_2,\lambda)$.
\label{prop:gammaclosure}
\end{property}

\subsection{Beta Distribution}
\label{sec:beta}
Turning to Theorem~\ref{thm:jointprob}, 
let $f_1(x_1)=\gamma_{\alpha_1,\lambda}(x_1)$ and $f_2(x_2)=\gamma_{\alpha_2,\lambda}(x_2)$.
Then, for $z=x_1+x_2$, $ \Pr(z|f_1,f_2) = (\gamma_{\alpha_1,\lambda}\star \gamma_{\alpha_2,\lambda})(z)
=\gamma_{\alpha_1+\alpha_2,\lambda}(z)$.
Hence~(\ref{eq:jointprob}) gives 
\begin{align}
\Pr(p|f_1,f_2) 
&= \int_0^\infty z\,\gamma_{\alpha_1,\lambda}(zp)\,\gamma_{\alpha_2,\lambda}(z(1-p))\, dz\\
&= \int_0^\infty \left[z\frac{\lambda^{\alpha_1}}{\Gamma(\alpha_1)}(zp)^{\alpha_1-1}e^{-\lambda zp}\right]
                \left[z\frac{\lambda^{\alpha_2}}{\Gamma(\alpha_2)}(z(1-p))^{\alpha_2-1}e^{-\lambda z(1-p)}\right] \frac{dz}{z}  \nonumber \\
&=   \left[\lambda^{\alpha_1+\alpha_2}\int_0^\infty z^{\alpha_1+\alpha_2-1} e^{-\lambda z} dz \right] 
                \frac{p^{\alpha_1-1} (1-p)^{\alpha_2-1}}{\Gamma(\alpha_1)\Gamma(\alpha_2)}  \nonumber \\              
&=  \frac{\Gamma(\alpha_1+\alpha_2)}{\Gamma(\alpha_1)\Gamma(\alpha_2)}\, p^{\alpha_1-1} (1-p)^{\alpha_2-1} \qquad  p\in[0,1]
\label{eq:beta}
\end{align}
The common decay $\lambda$ has cancelled out of $\Pr(p|f_1,f_2)$.
Hence, in the notation introduced above,  we may write the corresponding distribution $\scrP(f_1,f_2)$ simply as $\scrP(\alpha_1,\alpha_2)$, 
where $\alpha_1,\alpha_2$ are understood to be shape parameters of 2 gamma distributions with any common decay $\lambda>0$.
Correspondingly, we may write $\Pr(p|f_1,f_2)$ as $\Pr(p|\alpha_1,\alpha_2)$, 
the density of what is known as  the beta distribution  $\scrB(\alpha_1,\alpha_2)$.
Using the recursion $\Gamma(\alpha+1)=\alpha\Gamma(\alpha)$, the mean of $\scrB(\alpha_1,\alpha_2)$ is
\begin{align}
\mu\equiv\mathbb{E}[p] = 
\int_0^1 p \Pr(p|\alpha_1,\alpha_2) dp 
    &= \frac{\alpha_1}{\alpha_1+\alpha_2} \int_0^1 \Pr(p|\alpha_1+1,\alpha_2) dp \nonumber \\
    &= \frac{\alpha_1}{\alpha_1+\alpha_2} 
\label{eq:betamean} 
\end{align}

\subsection{Dirichlet Distribution}
\label{sec:dirichlet}
Generalising beyond $\scrB(\alpha_1,\alpha_2)$, 
consider  gamma densities $\{f_i(x_i)=\gamma_{\alpha_i,\lambda}(x_i): i=1,\ldots,n\}$
and $z=x_1+\dots+x_n$.
Letting $\alpha=\alpha_1+\dots+\alpha_n$,
$ \Pr(z|f_1,\ldots,f_n) = (\gamma_{\alpha,\lambda})(z)$.
Hence, with  $p_n=1-(p_1+\ldots+p_{n-1})$,~(\ref{eq:jointprob}) gives 
\begin{align}
\Pr(p_1,\ldots,p_{n-1}|\alpha_1,\ldots,\alpha_n) 
&= \int_0^\infty \frac{dz}{z} \prod_{i=1}^n z \,\gamma_{\alpha_i,\lambda}(zp_i)  \\
&= \Gamma(\alpha)\prod_{i=1}^n \frac{p_i^{\alpha_i-1}}{\Gamma(\alpha_i)}  \qquad  p_i\in[0,1]
\end{align}
This is the density of the multivariate beta distribution, which is more commonly  known as the Dirichlet distribution 
$\scrD(\alpha_1,\ldots,\alpha_n)$ 
with $n$ shape parameters arising from the $n$ conditioning  gamma distributions sharing a common decay $\lambda$.
There is no loss of generality in choosing $\lambda=1$. 
The derivation of the Dirichlet  distribution from $\scrG(\alpha_i,1)$ dates back to Ferguson~\cite{Ferguson1}.
However, we have derived the Dirichlet distribution as a particular case of Theorem~(\ref{thm:jointprob}), 
which admits any choice of conditioning distributions $\{F_i\}$.

\subsection{Dirichlet Marginals}
\label{sec:margnals}
Recalling Corollary~\ref{cor:marginal}, $f_{(i)}=f_1\star\dots\star f_n$ with $f_i$ omitted.
For $f_i=\gamma_{\alpha_i,\lambda}$  it readily follows that $f_{(i)}=\gamma_{\alpha-\alpha_i,\lambda}$.
Hence, by~(\ref{eq:marginal}) of Corollary~\ref{cor:marginal},
 the marginal distributions of $\scrD(\alpha_1,\ldots,\alpha_n)$ have densities 
\begin{align}
\Pr(p_i|\alpha_1,\ldots,\alpha_n) 
               &=  \frac{\Gamma(\alpha)}{\Gamma(\alpha_i)\Gamma(\alpha-\alpha_i)}\, p_i^{\alpha_i-1} (1-p_i)^{(\alpha-\alpha_i)-1}  
               \qquad (i=1\ldots n-1) 
\label{eq:betamarginal}
\end{align}
This is the density of the beta distribution $\scrB(\alpha_i,\alpha-\alpha_i)$, whose mean is
\begin{align}
\mu_i\equiv\mathbb{E}[p_i]   &= \frac{\alpha_i}{\alpha}  \qquad (i=1\ldots n-1) 
\label{eq:marginalmean}
\end{align}
and $\mu_n$ is defined by  $\mu_n= 1-(\mu_1+\dots+\mu_{n-1})=\alpha_n/\alpha$.

\subsection{Dirichlet Properties}
\label{sec:properties}
The Dirichlet distribution inherits its behaviour  from its conditioning gamma distributions.
Hence the shape parameters have a crucial role, as discussed above for the gamma distribution.
Shape   parameters below 1 push the probability mass toward the axes, whilst tending to concentrate it away from the axes for 
shape parameters above 1.
If all shape parameters  are 1 then the distribution is uniform.
For any $n$, if we choose   $\{\alpha_i>1\}$ for all $i$,  then we induce a $\scrD(\alpha_1,\ldots,\alpha_n)$ 
with probability concentrated away from the axes.
The larger such $\{\alpha_i\}$ are, the more sharply concentrated around its mean $\scrD(\alpha_1,\ldots,\alpha_n)$ will be.
On the other hand, if $\{\alpha_i<1\}$ for some or all $i$, then then we induce $\scrD(\alpha_1,\ldots,\alpha_n)$ 
where some or all its probability is pushed to the axes, although the mean may still lie away from the axes.
In the limit where we allow $\{\alpha_i=0\}$ for some but not all $i$, then a sample $\hat{P}(n)=(\hat{p}_1,\ldots,\hat{p}_n)$
from $\scrD(\alpha_1,\ldots,\alpha_n)$ will have $\hat{p}_i=0$ for corresponding $\alpha_i=0$, so that 
$\hat{P}(n)$ is concentrated on those $\hat{p}_i$ corresponding to $\alpha_i>0$.
An equivalent way of stating this is that the density of the conditioning distribution $\scrG(\alpha_i=0,1)$ is an atom at zero.
Yet the Dirichlet mean $\bar{P}(n)$ may remain quite smooth.

Visual representation would be of benefit but multivariate distributions are tricky to visualise.
The beta-distributed marginals $\scrB(\alpha_i,\alpha-\alpha_i)$ are a useful way to get  
a sense of the behaviour of the Dirichlet distribution.
It is also common to represent the Dirichlet distribution on an $n-1$ dimensional simplex of $n$-dimensional space 
but this is not easy to visualise beyond $n=3$.
 
The Dirichlet distribution often arises as a building block for a Dirichlet process -- 
a  family  of consistent Dirichlet distributions for different $n$.
The typical context involves progressive refinement of some spatial domain (often motivated through the metaphor  of 
repeated breaking of pieces of a stick).
In his paper, Ferguson~\cite{Ferguson1}  introduced
arbitrary partitioning  of a  general space $\cX$: 
$\cX= \cA_1\cup\dots\cup\cA_n$  ($\cA_i\cap\cA_j=\emptyset, i\ne j$). 
He then defined  a measure $\alpha$ on $\cX$: $\alpha_i\equiv\alpha(\cA_i)$ is the \textquote{size} of $\cA_i$.
Then defining a distribution  $\scrG(\alpha_i,1)$ on another measure $F_i\equiv F(\cA_i)$ and normalising using
$F(\cX)$ gives a Dirichlet distribution $\scrD(\alpha_1,\ldots,\alpha_n)$ on any $n$-cell partition of $\cX$.
Ferguson thus defined the Dirichlet process as a consistent family of such Dirichlet distributions for different $n$, 
 which makes sense because  the measure  $\alpha$ is additive under cell combination (union of disjoint sets)
 and the conditioning  gamma distributions are closed under addition of shape parameters.
As Ferguson established, in the $n\to\infty$  limit of an arbitrarily fine partition, the cell sizes approach zero 
so that  samples from  $\scrD(\alpha_1,\ldots,\alpha_n)$ will necessarily be  
concentrated on isolated cells,  rather as described above for $\alpha_i\ll 1$. 

With the general probabilistic framework in place, we may turn to a discussion of our model.

\section{Probabilistic Model}
\label{sec:probmodel}

Let there be $n$  clusters of interest, where  cluster $i$ has  $k_i$ vertices
(a cluster of interest will be determined by whether we are interested in  leaf attachment or deep attachment,
but   that distinction is not of particular significance at this point).
Let all  vertices  have  independent  identically distributed  masses with distribution $\scrG(\eta,1)$ 
for a chosen shape parameter $\eta>0$.
Hence, 
cluster $i$ has   mass distribution $\scrG(\eta k_i,1)$.
The corresponding normalised mass distribution is the Dirichlet distribution $\scrD(\eta k_1,\ldots, \eta k_n)$.

The next step is to generate a normalised  sample $\hat{P}(n)=(\hat{p}_1,\ldots,\hat{p}_n)\sim\scrD(\eta k_1,\ldots, \eta k_n)$
as discussed in Section~\ref{sec:maintheorem}.
$\hat{P}(n)$ is then a representative distribution over the $n$ clusters that may be used for attachment.
As discussed earlier, representative samples from $\scrD(\eta k_1,\ldots, \eta k_n)$  behave radically differently depending on whether
$\{\eta k_i<1\}$ or $\{\eta k_i>1\}$.
In the former case, a small number of clusters can dominate the distribution of a representative distribution $\hat{P}(n)$ because of the    concentration of  probability near the axes.
Thus, a small number of clusters (or vertices to which they attach) can attract the lion's share of the next wave of  attachments. 


How does this compare with common practice for growth by attachment? 
The popular approach of preferential attachment may be seen as a special case of what has been presented  here.

\subsection{Preferential Attachment}
\label{sec:prefattach}
The mean of $\scrD(\eta k_1,\ldots, \eta k_n)$ is the discrete distribution $\bar{P}(n)=(\mu_1,\ldots\mu_n)$
where 
\begin{align}
\mu_i &= \frac{k_i}{\sum_j k_j} \qquad  i=1\ldots n 
\label{eq:prefattach}
\end{align}
As previously noted, attachment based on such vertex in-degree (vertex count of attachments to a given vertex) is known as 
preferential attachment.
The mean~(\ref{eq:prefattach})  is insensitive to $\eta$. 
Hence the flexibility  that can be induced by a global $\eta$ on a representative Dirichlet sample  $\hat{P}$ is not visible to 
the mean $\bar{P}$ used in preferential attachment.

\subsubsection{Bianconi-Barab\'{a}si Fitness Scheme}
As  a variant of the foregoing, let cluster $i$ have an assigned intrinsic shape parameter $\eta_i$ 
and let each vertex within the cluster have a mass distribution $\scrG(\eta_i,1)$.
This leads to  $\scrD(\eta_1 k_1,\ldots, \eta_n k_n)$ and means 
\begin{align}
\mu_i &= \frac{\eta_i k_i}{\sum_j \eta_jk_j} \qquad  i=1\ldots n 
\label{eq:preffitness}
\end{align}
This is the   scheme of Bianconi and Barab\'{a}si~\cite{BianconiBarabasi1}, where the degree multipliers $\{\eta_i\}$
are referred to as fitness parameters.
We do not speak to the choice of the  $\{\eta_i\}$, which  could be sampled from an assigned distribution on $\eta$.

\subsubsection{Affine Preferential Attachment}
Let the   intrinsic vertex distribution $\scrG(\eta,1)$ be complemented 
by  an intrinsic  cluster distribution $\scrG(\beta,1)$ assigned to all clusters.
The effective  distribution of cluster $i$ is $\scrG(\eta k_i,1)\star\scrG(\beta,1) = \scrG(\eta k_i+\beta,1)$.
Alternatively, we can simply say that each cluster's shape parameter is displaced by $\beta$. 
This induces  $\scrD(\eta k_1+\beta,\ldots, \eta k_n+\beta)$, with means
\begin{align}
\mu_i &= \frac{\eta k_i+\beta}{\sum_j \eta k_j+\beta}   \qquad  i=1\ldots n  
\label{eq:prefattach2}
\end{align}
Using~(\ref{eq:prefattach2}) for attachment is known as affine preferential attachment in the random graph literature.
It  has been used to model the growth of random trees 
({\it e.g.}\ Garavaglia~{\it et al.}~\cite{GaravagliaHofstad}, 
Marchand and Manolescu~\cite{Marchand}).

\subsubsection{Random Forest}
For completeness, we note that the functional form of affine preferential attachment can be interpreted and used differently
by writing it as:  
\begin{align}
\mu_i &= \frac{\eta k_i+\beta}{\sum_j \eta k_j+\beta}   
          \equiv \frac{\eta k_i+\delta+(\beta-\delta)}{\sum_j \eta k_j+\beta}   
\label{eq:forest}
\end{align}
For $0<\delta<\beta$,~(\ref{eq:forest}) may be used for the two distinct steps:
\vspace{-1pc}
 \begin{labeling}{Attachment:}
 \setlength{\partopsep}{0pt}
\setlength{\topsep}{0pt}
\setlength{\itemsep}{0pt}
\setlength{\parskip}{0pt}
\setlength{\parsep}{0pt} 
\item[Attachment:] Free attachment to vertex $i$  with probability $\propto \eta k_i+\delta$ 
ensures that a leaf $l$, which has  in-degree $k_l=0$,  can also receive direct attachments with probability $\propto \delta$.
\item[Creation:]  Creation of a new root, unattached to any previous vertex,
  with  probability $\propto \beta-\delta$. 
\end{labeling}
\vspace{-1pc}
Creation of new roots, or planting of random  trees,  in addition to attachment to existing vertices
results in a random forest even though the process starts from a single rooted tree.

\subsubsection{Chinese Restaurant Metaphor}
The random forest interpretation above is conceptually similar to the
Chinese Restaurant metaphor, 
where a new guest joins an occupied table  $i$ with probability $\propto k_i$ (number of guests  already seated at table $i$), 
or starts a new table with probability  $\propto \beta$.
However, there is no need in this case  to introduce $\delta$ because  tables are isolated  and not linked like vertices.
Hence the concept of a deep table and a shallow table (leaf) does not arise, $k_i$ is table occupancy rather than in-degree.

The whole of the foregoing discussion arises from  gamma conditioning distributions sharing a common decay, taken to be 1.
But Theorem~\ref{thm:jointprob} does not impose  gamma conditioning distributions at all.
We explore  an alternative choice next.

\section{Stable Conditioning Distributions}
\label{sec:stabletrees}

Consider a distribution $\cS(\alpha,\nu)$  for $\alpha>0$ and  $0<\nu<1$, 
with density $f_{\alpha,\nu}(x)$ whose Laplace transform is
\begin{align}
\tilde f_{\alpha,\nu}(s) &= \exp \left(-\alpha s^\nu \right) 
\label{eq:stablelplc} \\
\implies -\tilde f^\prime_{\alpha,\nu}(s) &=\frac{\nu}{s^{1-\nu}} \, \tilde f_{\alpha,\nu}(s)
\label{eq:stablelplcprime}
\end{align}
Distributions with Laplace  transform~(\ref{eq:stablelplc}) are referred  as \textquote{stable}, for reasons we shall not dwell on for present purposes.
It suffices here to note that: 
\vspace{-1pc}
 \begin{labeling}{Stable 2:}
 \setlength{\partopsep}{0pt}
\setlength{\topsep}{0pt}
\setlength{\itemsep}{0pt}
\setlength{\parskip}{0pt}
\setlength{\parsep}{0pt} 
\item[Stable 1:] $\cS(\alpha,\nu)$ is fat-tailed with infinite mean: 
$0<\nu<1 \implies -\tilde f^\prime_{\alpha,\nu}(0)=\infty$.
\item[Stable 2:] $\cS(\alpha_1,\nu)\star\cS(\alpha_2,\nu) = \cS(\alpha_1+\alpha_2,\nu)$. 
$\cS(\alpha,\nu)$ is closed under convolution for fixed $\nu$, like the gamma distribution.
This  follows readily from the Laplace  transform~(\ref{eq:stablelplc}).
\end{labeling}


Th challenge  is that analytic forms of the density $f_{\alpha,\nu}(x)$ are elusive.
A well-known instance is the case $\nu=1/2$, 
known as the L\'{e}vy distribution $\scrL(\alpha)\equiv\cS(\alpha,\frac{1}{2})$, with density 
\begin{align}
 f_{\alpha,\frac{1}{2}}(x) &= \frac{\alpha\,e^{-\alpha^2/4x}}{2\sqrt{\pi x^3}} 
\label{eq:levy}
\end{align}
(We are aware of only one other analytic form for $\nu=1/3$ that we will not discuss here.)

The  L\'{e}vy distribution is fat-tailed, with infinite mean like all stable distribution on $(0,\infty)$.
It has power-law asymptotic behaviour $f_{\alpha,\frac{1}{2}}(x) \sim \alpha x^{-3/2}$. 
Theorem~\ref{thm:jointprob} states that, for $\scrL_i\equiv\scrL(\alpha_i)$ and any $n$,
we can construct the multivariate distribution $\scrP(\scrL_1,\ldots,\scrL_n)$. 
We present here the analytic  form of the marginals rather than the  full joint density.

To our awareness, Theorem~\ref{thm:levymarginal} is novel.
\begin{tcolorbox}
\begin{theorem} 
The  distribution $\scrP(\alpha_1,\ldots,\alpha_n)$ with  L\'{e}vy   conditioning distributions
$\{\scrL(\alpha_1),\ldots,\scrL(\alpha_n)\}$ has marginal densities
\begin{align}
\Pr(p_i|\alpha_1,\ldots,\alpha_n) 
             &= \frac{S_{\alpha_i,\alpha-\alpha_i}(p_i)} {\pi\sqrt{p_i(1-p_i)}}   \qquad  \alpha = \alpha_1+\dots+\alpha_n, 
                                     \quad i=1\ldots n-1 \nonumber  \\
\textrm{where} \quad  S_{\alpha,\beta}(p) &= \frac{\alpha\beta}{\alpha^2 (1-p)+\beta^2 p} \quad \textrm{for any $\alpha,\beta>0$}  \nonumber
\end{align}
\label{thm:levymarginal} 
\end{theorem}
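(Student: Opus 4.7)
The plan is to apply Corollary~\ref{cor:marginal} directly, using the stable closure property to identify the ``leave-one-out'' convolution, and then evaluate the resulting one-dimensional integral in closed form.

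First, by Stable~2 applied $n-2$ times, the convolution $f_{(i)}$ of all the Lévy densities except the $i$-th is itself a Lévy density: $f_{(i)} = f_{\alpha-\alpha_i,1/2}$, where $\alpha=\alpha_1+\dots+\alpha_n$. Writing $a=\alpha_i$, $b=\alpha-\alpha_i$, $p=p_i$, $q=1-p_i$ to reduce clutter, Corollary~\ref{cor:marginal} combined with~(\ref{eq:levy}) gives
\begin{align*}
\Pr(p_i\mid\alpha_1,\ldots,\alpha_n)
 &= \int_0^\infty z\, f_{a,1/2}(zp)\, f_{b,1/2}(zq)\, dz \\
 &= \frac{ab}{4\pi (pq)^{3/2}} \int_0^\infty \frac{1}{z^2}
      \exp\!\left(-\frac{1}{4z}\!\left(\frac{a^2}{p}+\frac{b^2}{q}\right)\right) dz,
\end{align*}
after collecting the $z$-dependence ($z\cdot z^{-3/2}\cdot z^{-3/2}=z^{-2}$) and consolidating the two exponential factors.

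Next I evaluate the remaining integral. Setting $C=\tfrac{1}{4}\bigl(a^2/p+b^2/q\bigr) = (a^2q+b^2p)/(4pq)$ and substituting $u=C/z$, so $du=-C\,dz/z^2$, the integral is elementary:
\begin{equation*}
\int_0^\infty \frac{1}{z^2}\,e^{-C/z}\,dz \;=\; \frac{1}{C}\int_0^\infty e^{-u}\,du \;=\; \frac{1}{C} \;=\; \frac{4pq}{a^2q+b^2p}.
\end{equation*}
Plugging this back cancels the factor $(pq)^{3/2}$ down to $(pq)^{1/2}$, producing
\begin{equation*}
\Pr(p_i\mid\alpha_1,\ldots,\alpha_n)
 \;=\; \frac{ab}{\pi\sqrt{pq}\,\bigl(a^2q+b^2p\bigr)}
 \;=\; \frac{S_{a,b}(p)}{\pi\sqrt{p(1-p)}},
\end{equation*}
which is the claimed form after restoring $a=\alpha_i$, $b=\alpha-\alpha_i$, $p=p_i$.

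There is no genuine obstacle here: the Lévy density has an explicit closed form, its Laplace transform supplies the closure identity for $f_{(i)}$, and the product of two Lévy densities at scaled arguments reduces after an elementary change of variables. The only point requiring mild care is algebraic bookkeeping of the exponent and the powers of $z$; once those are consolidated, the integral $\int_0^\infty z^{-2} e^{-C/z}\,dz = 1/C$ does all the work. If anything were to go wrong, it would be at the step of identifying $f_{(i)}$, but this is guaranteed by the stable closure property~(Stable~2) stated immediately before the theorem.
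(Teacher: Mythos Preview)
Your proof is correct and follows essentially the same route as the paper: apply Corollary~\ref{cor:marginal}, use the stable closure property to identify $f_{(i)}=f_{\alpha-\alpha_i,1/2}$, substitute the explicit L\'{e}vy density, and reduce the $z$-integral to an elementary exponential integral via a reciprocal change of variable. The only cosmetic difference is that you substitute $u=C/z$ while the paper substitutes $u=1/z$; the computation is otherwise identical.
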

\tcblower
The denominator of $\Pr(p_i|\alpha_1,\ldots,\alpha_n)$  is the density of the beta distribution $\scrB(\tfrac{1}{2},\tfrac{1}{2})$:
\begin{align}
\frac{1}{\pi\sqrt{p(1-p)}} &\equiv \frac{\Gamma(1)}{\Gamma(\frac{1}{2})\Gamma(\frac{1}{2})}\, p^{\frac{1}{2}-1}  (1-p)^{\frac{1}{2}-1} 
\qquad \left(\Gamma(\tfrac{1}{2}) = \sqrt{\pi}\right)  \nonumber 
\end{align}
\end{tcolorbox}

\begin{proof}[Proof of Theorem \ref{thm:levymarginal}]
Let the density of  $\scrL(\alpha_i)$ be $f_i(x)\equiv f_{\alpha_i,\frac{1}{2}}(x)$ and 
let $f_{(i)}$ be the convolution of $\{f_1,\dots,f_n\}$ with $f_i$ omitted, {\it i.e.}\ $f_{(i)}(x)= f_{\alpha-\alpha_i,\frac{1}{2}}(x)$
by closure of stable distributions under convolution.
Hence, by Corollary~\ref{cor:marginal},
\begin{align}
\Pr(p_i|\alpha_1,\ldots,\alpha_n) 
      &= \int_0^\infty z f_i(zp_i) f_{(i)}(z(1-p_i)) dz \nonumber  \\
      &= \int_0^\infty z f_{\alpha_i,\frac{1}{2}}(zp_i) f_{\beta_i,\frac{1}{2}}(z(1-p_i))dz \qquad \beta_i=\alpha-\alpha_i \nonumber \\
      &= \int_0^\infty z \left[\frac{\alpha_i\,e^{-\alpha_i^2/4zp_i}}{2\sqrt{\pi (zp_i)^3}}\right]
                                 \left[\frac{\beta_i\,e^{-\beta_i^2/4z(1-p_i)}}{2\sqrt{\pi (z(1-p_i))^3}}\right] dz \nonumber \\
      &= \frac{\alpha_i \beta_i}{4\pi\sqrt{p_i^3(1-p_i)^3}} 
            \int_0^\infty   \exp-\tfrac{u}{4}\left[\tfrac{\alpha_i^2(1-p_i)+\beta_i^2p_i}{p_i(1-p_i)}\right] \; du \nonumber \\
      &= \frac{S_{\alpha_i,\beta_i}(p_i)} {\pi\sqrt{p_i(1-p_i)}} 
\label{eq:levymarginal}
\end{align}
with $S_{\alpha,\beta}(p)$ as defined in the statement of Theorem~\ref{thm:levymarginal}.
\end{proof}

The beta distribution term in~(\ref{eq:levymarginal}) has a  fixed shape parameter of $1/2$,
{\it i.e.}\ it concentrates the probability mass toward the axes as discussed earlier.
The associated factor $S_{\alpha,\beta}(p)$  on $0\le p\le 1$ has the following properties:
\vspace{-1pc}
\begin{myitemize}
\item $S_{\alpha,\beta}(p)>0$.
\item $S_{\alpha,\beta}(p)$ is monotonic  from $S_{\alpha,\beta}(0)=\beta/\alpha$ to $S_{\alpha,\beta}(1)=\alpha/\beta$.
 $S_{\alpha,\alpha}(p)=1$.
\item $S_{\alpha,\beta}(p)$ is invariant under a global scale change $(\alpha,\beta)\to(\lambda\alpha,\lambda\beta),\,\lambda>0$.
\end{myitemize}
The effect of  $S_{\alpha,\beta}(p)$  is to skew the beta term to one end or the other depending on the ratio $\alpha/\beta$.
But, for all $\alpha/\beta$ combinations, the probability mass remains  concentrated toward the axes.
This contrasts with  the Dirichlet case, where the probability mass is concentrated away from the axes for $(\alpha,\beta)>1$ 
and toward the axes for $(\alpha,\beta)<1$.

The attributes of distributions and  processes imposed by  L\'{e}vy conditioning distributions and other stable distributions 
warrant more detailed study.
We  aim to  pursue such study  in a separate paper.


\section{Conclusion}
\label{sec:conclusion}
In presenting this paper, we have followed  the thought process from an initial interest in the  growth of random trees by leaf attachment  
to a general discussion of probabilistic  attachment based on vertex clusters. 
Leaf attachment itself was inspired by distributed ledgers, which can be modelled as random trees
that grow by leaf attachment, but the thought process quickly evolved from distributed ledgers to the growth of trees in general.
Indeed, the very mention of  distributed ledgers proved to be a distraction because of the many issues that come up that are not, in their nature, about probabilistic modelling.

The benefit of starting with leaf attachment is that it led rather naturally to cluster thinking.
This together  with  intrinsic vertex masses that, in turn,  induce cluster masses through additivity led to a rich probabilistic 
framework and a novel theorem.
A special case of the theorem led to the Dirichlet distribution, whose mean gives the degree distributions of preferential attachment.

We concluded by proposing another choice of distributions consistent with the general theorem, the stable distributions such as the L\'{e}vy distribution that we propose to explore further in a separate paper dedicated to what we refer to as stable random trees 
that grow by probabilistic attachment. 

\bibliography{Leaf_2021}{}
\bibliographystyle{plain} 

\end{document}